\DeclareMathOperator{\curl}{curl}
\newtheorem{thm}{Theorem}[section]
\newtheorem{lem}[thm]{Lemma}
\newtheorem{lemma}[thm]{Lemma}
\newtheorem{proposition}[thm]{Proposition}
\newtheorem{corollary}[thm]{Corollary}
\theoremstyle{remark}
\newtheorem{rem}[thm]{Remark}
\newcommand{\Int}[2]{\displaystyle{\int_{#1}^{#2}}}
\newcommand{\Ab}{\mathbf {A}}
\newcommand{\R}{\mathbb{R}}
\def\sig#1{\vbox{\hsize=5.5cm
\kern2cm\hrule\kern1ex
\hbox to \hsize{\strut\hfil #1 \hfil}}}
\newcommand\signatures[4]{%
\vspace{3cm}
\hbox to \hsize{\hfil #1, \today\hfil}
\vspace{3cm}
\hbox to \hsize{\quad#2\hfil\hfil #3\quad}
\vspace{3cm}
\hbox to \hsize{\hfil#4\hfil}}
\numberwithin{equation}{section}
\title[Magnetic Robin Laplacian]{Counterexample to strong diamagnetism\\ for the magnetic Robin Laplacian}
\subjclass[2010]{Primary 35P15, 47A10, 47F05}
\keywords{Magnetic Laplacian, Robin boundary condition, eigenvalues, diamagnetic inequalities}
\author[A. Kachmar]{Ayman Kachmar}
\address[Ayman Kachmar]{Lebanese University, Department of Mathematics, Nabatieh, Lebanon.}
\email{ayman.kashmar@gmail.com}
\author[M. P.-Sundqvist]{Mikael P. Sundqvist}
\address[Mikael Persson Sundqvist]{Lund University, Department of Mathematical 
Sciences, Box 118, 221\ 00 Lund, Sweden.}
\email{mikael.persson\_sundqvist@math.lth.se}
\begin{document}
\maketitle
\begin{abstract}
We determine a counterexample to strong diamagnetism for the Laplace operator in the unit disc with a uniform magnetic field and Robin boundary condition. The example follows from the accurate asymptotics of the lowest eigenvalue when the Robin parameter tends to $-\infty$.
\end{abstract}

\section{Introduction}

\subsection{Magnetic Robin Laplacian} 

We denote by $\Omega=\{x\in\R^2~:~|x|<1\}$ the open unit disk and by $\Gamma=\partial\Omega=\{x\in\R^2~:~|x|=1\}$ its boundary. 
We study the lowest eigenvalue of the magnetic Robin
Laplacian in $L^2(\Omega)$,
\begin{equation}\label{Shr-op-Gen}
\mathcal{P}^b_\gamma=-(\nabla-ib\Ab_0)^2,
\end{equation}
with domain
\begin{equation}\label{eq:bc}
D(\mathcal P^b_\gamma)=\{u\in H^2(\Omega)~:~\nu\cdot(\nabla u-ib\Ab_0)u+\gamma\,u=0\quad{\rm on}~\partial\Omega\}\,.
\end{equation}
Here $\nu$ is the unit outward normal vector of $\Gamma$, $\gamma<0$ the \emph{Robin} parameter and $b>0$  is the \emph{intensity} of the applied magnetic field. The vector field $\Ab_0$ generates the unit magnetic field and is defined as follows
\begin{equation}\label{eq:A0}
\Ab_0(x_1,x_2)=\frac12(-x_2,x_1)\,.
\end{equation}

To be more precise, the operator $\mathcal P^b_\gamma$ is defined as the Friedrichs extension,
starting from the 
quadratic form \cite[Ch.~4]{Hel},  
\begin{equation}\label{QF-Gen}
H^1(\Omega)\ni u\mapsto  \mathcal{Q}^b_\gamma(u):=\int_\Omega\bigl|(\nabla-ib\Ab_0) u(x)\bigr|^2\,dx+\gamma\Int{\Gamma}{}|u(x)|^{2}\,ds(x)\,.
\end{equation}
\subsection{Main result}

The operator $\mathcal{P}^b_\gamma$ has a compact resolvent, and thus its
spectrum consists of an increasing sequence of eigenvalues.
We are interested in examining the  asymptotics of the principal eigenvalue 
\begin{equation}\label{eq:p-ev}
\lambda_1(b,\gamma)=\inf_{u\in H^1(\Omega)}\frac{\mathcal{Q}^b_\gamma(u)}{\|u\|_{L^2(\Omega)}^2}
\end{equation} 
when $b>0$ is \emph{fixed} and the Robin parameter $\gamma$ tends to $-\infty$. 

\begin{thm}\label{thm:KS}
Let $b>0$. Then, as $\gamma\to-\infty$,
\[\lambda_1(b,\gamma)=-\gamma^2+\gamma +\inf_{m\in\mathbb Z}\Bigl(m-\frac{b}{2}\Bigr)^2
{-\frac{1}{2}}+o(1).
\]
\end{thm}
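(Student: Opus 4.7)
The plan is to exploit the rotational symmetry of $\Omega$ to reduce to a family of one-dimensional fibre operators, analyse each of them by a boundary-layer expansion adapted from the non-magnetic Robin Laplacian, and take the infimum over the fibre index. Since $\mathbf{A}_0$ is tangent to $\Gamma$, the normal part of the boundary condition reads simply $\partial_r u(1,\theta) + \gamma u(1,\theta) = 0$ in polar coordinates. The Fourier ansatz $u(r,\theta) = \sum_{m\in\mathbb{Z}} e^{im\theta} u_m(r)$ diagonalises $\mathcal{P}^b_\gamma$ into the family
\[H_m = -\partial_r^2 - \frac{1}{r}\partial_r + \frac{(m - br^2/2)^2}{r^2}\]
on $L^2((0,1),r\,dr)$ with Robin condition at $r=1$ and the usual regularity at $r=0$, so that $\lambda_1(b,\gamma) = \inf_{m \in \mathbb{Z}} \mu(m,b,\gamma)$, where $\mu(m,b,\gamma)$ is the ground state energy of $H_m$.

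The core of the analysis is a boundary-layer expansion for each $H_m$. I rescale $\tau = -\gamma(1-r) \ge 0$, so that $r = 1 + \tau/\gamma$ with $\tau\in(0,-\gamma)$, and expand the first-order coefficient and the potential as
\[-\gamma/r = -\gamma + \tau + O(1/\gamma), \qquad (m-br^2/2)^2/r^2 = (m-b/2)^2 + O(1/\gamma).\]
With the ansatz $\mu = -\gamma^2 + \gamma\mu_{-1} + \mu_0 + o(1)$ and $u_m = f_0 + f_1/\gamma + f_2/\gamma^2 + \cdots$, matching at orders $\gamma^2$ and $\gamma$ yields $f_0(\tau) = e^{-\tau}$, $\mu_{-2}=-1$, and (by Fredholm solvability) $\mu_{-1}=1$, with $f_1$ determined modulo $\ker(-\partial_\tau^2+1)$ and taken to be $0$. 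At order $\gamma^0$ the equation becomes
\[-f_2'' + f_2 = \tau e^{-\tau} + \bigl(\mu_0 - (m-b/2)^2\bigr) e^{-\tau}.\]
Since $e^{-\tau}$ spans the kernel of $-\partial_\tau^2 + 1$ on $(0,\infty)$ with Robin condition $f'(0)+f(0)=0$, pairing the right-hand side with $e^{-\tau}$ and using $\int_0^\infty \tau e^{-2\tau}\,d\tau = 1/4$ and $\int_0^\infty e^{-2\tau}\,d\tau = 1/2$ forces $\mu_0 = (m-b/2)^2 - 1/2$.

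With this formal calculation in hand, the upper bound follows by truncating the quasimode with a cutoff supported in a neighbourhood of $r=1$ and inserting it into the Rayleigh quotient of $H_m$; the truncation errors are exponentially small in $|\gamma|$, and taking the infimum over $m$ gives the desired upper bound on $\lambda_1(b,\gamma)$. For the lower bound I would first use a crude estimate exploiting the centrifugal term together with a Robin boundary inequality to show $\mu(m,b,\gamma) \ge -\gamma^2 + c\,m^2$ for some $c>0$ independent of $\gamma$, so only finitely many $m$ can realise the infimum for $|\gamma|$ large. For each such $m$, an Agmon-type estimate confines the ground state of $H_m$ to the boundary layer $\{1-r \lesssim |\gamma|^{-1}\}$ up to exponentially small tails, and an IMS localisation then reduces the problem to the half-line model analysed above, producing the matching lower bound up to $o(1)$.

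The main obstacle is this $o(1)$-sharp lower bound: one must propagate the Agmon decay together with the next-order corrections through the IMS localisation without losing the $-1/2$ contribution. That contribution encodes the curvature of $\Gamma$ through the $(1/r)\partial_r$ term in $H_m$ (visible in the appearance of $\int_0^\infty \tau e^{-2\tau}\,d\tau = 1/4$ in the solvability condition) and is precisely what distinguishes the disk asymptotics from the flat half-plane Robin problem; keeping track of it rigorously is the technical heart of the argument.
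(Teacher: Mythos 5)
Your proposal is correct and follows essentially the same route as the paper: localization of the ground state to a boundary layer, Fourier decomposition into radial fiber operators, a quasimode built from exactly the solvability conditions you compute (your $\mu_{-1}$, $\mu_0$ correspond, after the sign change $h^{1/2}=-1/\gamma$, to the paper's $\mu_1=-1$, $\mu_2=-\tfrac12$, with the $-\tfrac12$ likewise coming from the curvature term $\tau\,\frac{d}{d\tau}$), elimination of large angular momenta, and a spectral-gap argument for the lower bound. The only point you flag as delicate---carrying the $-\tfrac12$ through an Agmon/IMS lower bound on each fiber---is handled in the paper more directly: after cutting to an annulus of width $h^{\rho}$ (with $h=\gamma^{-2}$, $\rho\in(\tfrac14,\tfrac12)$), the fiber potential differs from $(m-\tfrac b2)^2h$ by $O(h^{1+\rho})=o(h)$ uniformly in $|m|\le A$, so the min--max principle transfers the two-term-plus-correction expansion of a single weighted 1D Robin Laplacian (whose order-one spectral gap is already known from \cite{HK-tams}) to every relevant fiber at once, with no per-fiber localization needed.
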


The first two terms in the asymptotic expansion given in Theorem~\ref{thm:KS} are well known after many contributions (see \cite{Pan, PP1, PP2} for the case $b=0$ and \cite{K-dia} for the case $b>0$);  however, the third correction term is new for the disc geometry for $b>0$. The recent contribution \cite[Thm.~1.5]{KOP} shows that Theorem~\ref{thm:KS} continues to hold in the case $b=0$.

\subsection{Lack of strong diamagnetism}

The celebrated diamagnetic inequality yields 
\[
\lambda_1(b,\gamma)\geq \lambda(0,\gamma)\,.
\]
By using  Theorem~\ref{thm:KS}, we can quantify the diamagnetic inequality  as follows
\begin{equation}\label{eq:dm}
\lambda_1(b,\gamma)-\lambda_1(0,\gamma)\underset{\gamma\to-\infty}{\sim} e(b):= \inf_{m\in\mathbb Z}\Bigl(m-\frac{b}{2}\Bigr)^2
\,.
\end{equation}
Connected to the diamagnetic inequality is the property of  strong diamagnetism \cite{E}; this is whether the function $b\mapsto\lambda_1(b,\gamma)$ is monotone increasing on some interval $[b_0,+\infty)\subset\R_+$.

As consequence of Theorem~\ref{thm:KS}, we obtain a counterexample to strong diamagnetism.
 
\begin{corollary}\label{corol:KS}
There exists $\gamma_0<0$ such that, for all $\gamma\in(-\infty,\gamma_0]$, the function $b\mapsto \lambda_1(b,\gamma)$ is not monotone increasing.
\end{corollary}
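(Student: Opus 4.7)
The proof will be an essentially direct application of Theorem~\ref{thm:KS}, since monotonicity is ruled out by any single pair of values where $\lambda_1(\cdot,\gamma)$ fails to increase. The plan is to locate two field intensities $b_1<b_2$ at which the leading non-trivial term of the asymptotics, namely $e(b)=\inf_{m\in\mathbb Z}(m-b/2)^2$, is strictly decreasing, and then let $\gamma\to-\infty$ amplify that decrease.

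First I would analyse the function $e$. It is $2$-periodic, equals $0$ whenever $b$ is an even integer, and attains its maximum $1/4$ whenever $b$ is an odd integer, so in particular $e(1)=1/4>0=e(2)$. Any such pair suffices; I fix $b_1=1$ and $b_2=2$ for concreteness.

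Next, applying Theorem~\ref{thm:KS} twice (each time at a fixed value of $b$, which is all the theorem provides), I subtract the two asymptotic expansions. The $-\gamma^2+\gamma-\tfrac12$ part cancels, leaving
\[
\lambda_1(b_1,\gamma)-\lambda_1(b_2,\gamma)=e(b_1)-e(b_2)+o(1)=\frac14+o(1)\quad(\gamma\to-\infty).
\]
Hence there exists $\gamma_0<0$ such that $\lambda_1(1,\gamma)>\lambda_1(2,\gamma)$ for every $\gamma\le\gamma_0$. Since $1<2$, the map $b\mapsto\lambda_1(b,\gamma)$ cannot be monotone increasing on $\R_+$ for any such $\gamma$, which is the desired conclusion.

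There is essentially no obstacle here once Theorem~\ref{thm:KS} is in hand; the only minor point to be careful about is that the $o(1)$ remainder is provided separately at each fixed $b$ (the theorem is not stated uniformly in $b$), but this is harmless because only the two specific values $b=1$ and $b=2$ enter the argument, so one simply takes $\gamma_0$ to be the minimum of the two thresholds produced by the respective asymptotic expansions.
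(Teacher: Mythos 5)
Your proof is correct and follows essentially the same route as the paper: both arguments evaluate the expansion of Theorem~\ref{thm:KS} at finitely many fixed values of $b$ and exploit the non-monotonicity of $e(b)=\inf_{m\in\mathbb Z}(m-b/2)^2$, so the non-uniformity of the $o(1)$ remainder in $b$ is handled in exactly the same way. The only difference is that the paper places its counterexample triple beyond an arbitrary prescribed threshold $A$ (so that monotonicity also fails on every tail interval $[b_0,+\infty)$, matching its stated definition of strong diamagnetism), whereas your fixed pair $b_1=1<b_2=2$ establishes the corollary exactly as literally stated.
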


Besides its mathematical interest, the question of strong diamagnetism has applications to Physics, particularly in the context of superconductivity \cite{FH-b}. In the case of a simply connected domain subject to a uniform applied magnetic field and Neumann boundary condition ($\gamma=0$), strong diamagnetism holds  \cite{FH-cmp, FH-aif}. Counter examples of strong diamagnetism   exist for uniform magnetic fields in non-simply connected domains, or for non-uniform magnetic fields in simply connected domains \cite{FPS, HK-ring}. Interestingly, the Robin boundary condition has the unique feature where strong diamagnetism \emph{fails} for  the disc (which is a simply connected domain) even when it is subject to a \emph{uniform} applied magnetic field. 

Corollary~\ref{corol:KS} results from the following statement. 
Given a positive real number $A$, there exist $\gamma_0<0$ and $A<b_1<b_2<b_3$ 
such that, for all $\gamma\in(-\infty,\gamma_0]$,
\[
\lambda_1(b_1,\gamma)<\lambda_1(b_2,\gamma)~\&~\lambda_1(b_2,\gamma)>\lambda_1(b_{3},\gamma)\,.
\]
We can simply select the constants $b_i$ as follows
\[
b_{1}=2n_0\,,\quad b_2=2n_0+1\,,\quad b_{3}=2n_0+\frac32\,,
\]
where $n_0$ is the smallest natural number satisfying $n_0> A$; the conclusion then follows from Theorem~\ref{thm:KS}.

Using the periodicity of the function $b\mapsto e(b)$, given a  natural number     $N$, we can select  $\gamma_1<0$ such that
\[
b_{1,i}:=b_1+i<b_{2,i}:=b_2+i<b_{3,i}:=b_3+i
\]
with the following two inequalities
\[
\lambda_1(b_{1,i},\gamma)<\lambda_1(b_{2,i},\gamma)\,,\quad \lambda_1(b_{2,i},\gamma)>\lambda_1(b_{3,i},\gamma)\,,
\]
holding for all $\gamma\leq \gamma_1$ and $i\in\{1,2,\cdots,N\}$.  

\begin{figure}\label{fig:graphs}
\centering
\includegraphics{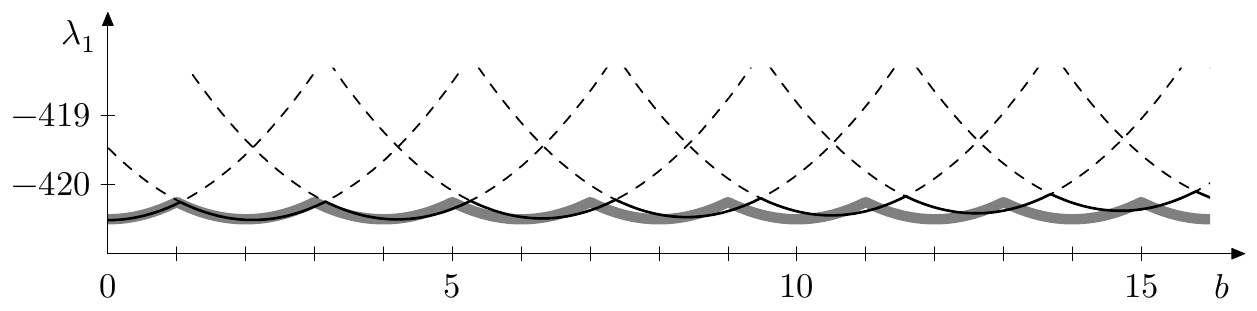}
\caption{We fix $\gamma=-20$. 
The solid black curve is the graph of the function $b\mapsto \lambda_1(b,\gamma)$ 
for $0<b<16$, calculated numerically using Wolfram Mathematica and the fact that
the eigenvalues of the fiber operators (see Section~\ref{sec:fiber}) 
satisfy certain equations involving Whittaker functions.
The dashed curves show how these eigenvalues
continue outside the interval where they give the minimum energy.
The gray thick curve in the background is the graph of
the function  $b\mapsto -\gamma^2+\gamma+\inf_{m\in\mathbb Z}(m-b/2)^2-1/2$.}
\end{figure}

\subsection{The Little--Parks effect}
When cooled below a certain \emph{critical temperature}, a normal conductor becomes a superconductor and looses electrical resistance.
The  Little--Parks experiment displays oscillations in the critical temperature of a superconductor as the applied magnetic field varies. Typically, the superconducting sample used in the experiment is a thin ring. From a mathematical perspective,   the  trivial \emph{normal}  solution of the Ginzburg--Landau equations changes back and forth from  stable to  unstable states.

Using a model for a superconductor with  enhanced surface \cite{FJ, MI}, we can use  Theorem~\ref{thm:KS} to estimate the critical temperature as a function of the applied magnetic field, consistent with the Little--Parks experiment. The novelty in our situation is that, unlike the Little--Parks experiment, the superconducting sample is a disc subject to a uniform magnetic  field.

The model we study is a variant of the Ginzburg--Landau energy by adding a (negative) surface energy term (amounting for the enhanced surface). 
The surface term can be derived naturally  starting  from a Ginzburg--Landau model for two adjust superconductors \cite[Thm.~1.2]{K-cocv}.

Following the presentation in~\cite[Sec. 3]{GS}, we introduce the  functional 
\begin{multline}\label{eq:GL}
\mathcal E^{\mathrm{phys}}(u,\mathbf a)=\frac{\hbar^2}{2m \ell}\int_{\partial\widetilde\Omega}|u|^2\,d\sigma(\tilde x)+\\
\int_{\widetilde\Omega}
\left(\frac1{2m}\left|\left(\hbar\nabla-i\frac{2e}{c}\mathbf a\right)u\right|^2+\alpha(T)|u|^2+\frac{\beta}{2}|u|^4+\frac1{8\pi}|\curl\mathbf a-H|^2\right)\,d\tilde x\,.
\end{multline} 
Here $(u,\mathbf a)$ describes the superconducting properties ($(u,\mathbf a)\equiv(0,H\Ab_0)$ signifies  the normal state, where $\Ab_0$ is introduced in \eqref{eq:A0}); $T$ denotes the temperature;  $\hbar$, $e$, $c$, $m$, $\beta$ 
are positive  constants; $H\geq 0$ measures the intensity  of the applied magnetic field   and $\ell<0$ models the enhanced surface. The disc $\widetilde\Omega=\{\tilde x\in\R^2~:~|\tilde x|<R\}$ is  the horizontal   cross section of the superconducting sample. The magnetic permeability $\mu_0$ in $\widetilde\Omega$ is assumed uniform, so we take $\mu_0=1$. For later use, we introduce the Ginzburg--Landau parameter
\begin{equation}\label{eq:kappa}
\kappa=\sqrt{\frac{m^2\beta c^2}{8\pi e^2\hbar}}\,.
\end{equation}
Among all the parameters in \eqref{eq:GL}, only $\alpha(T)$ depends on the temperature $T$; consequently, $\kappa$ is temperature independent. The expression of $\alpha(T)$ is given via the following relation
\begin{equation}\label{eq:c-l}
\alpha(T)=\frac{\hbar^2}{2m\xi_0^2}\left(\frac{T}{T_{c_0}}-1\right)\,,
\end{equation}
where $\xi_0>0$ is a temperature independent parameter, the  \emph{coherence length}  at zero temperature. The parameter $T_{c_0}$ is the critical temperature of the superconductor occupying $\widetilde\Omega$ in the absence of a magnetic field (i.e. when $H=0$).

We will express the functional in~\eqref{eq:GL} in temperature independent units,
and introduce some notation
\begin{align*}
\tilde x&=R x, & \widetilde\Omega&=R\Omega, &u(\tilde x) &= \sqrt{\frac{|\alpha(0)|}{\beta}}\psi(Rx),\\
\mathbf a(\tilde x)&=\frac{c\hbar}{2e\xi_0^2}R\Ab(Rx),&b&=\frac{c\hbar }{2e\xi_0^2} RH,& \gamma&=\frac{R}{\ell}.
\end{align*}
The functional in \eqref{eq:GL} becomes
\[
\mathcal E^{\mathrm{phys}}(u,\mathbf a)=\frac{|\alpha(0)|\hbar^2}{2m\beta}\mathcal E(\psi,\Ab)\,,
\]
where
\begin{multline}\label{eq:GL*}
\mathcal E(\psi,\Ab)=\gamma\int_{\partial\Omega}|\psi|^2\,d\sigma(x)+\\
\int_\Omega \biggl(|(\nabla-i\Ab)\psi|^2-\frac{R^2}{\xi_0^2}\mu(T)|\psi|^2+\frac{R^2}{2\xi_0^2}|\psi|^4+\kappa^2|\curl\Ab-b|^2\biggr)\,dx\,,
\end{multline}
and
\begin{equation}\label{eq:mu(T)}
\mu(T):=1-\frac{T}{T_{c_0}}\,.
\end{equation}
The functional in \eqref{eq:GL*} is defined on the space
\[
\mathcal H= H^1(\Omega;\mathbb C)\times H^1(\Omega;\R^2)\,.
\]
Clearly, the normal solution $(0,b\Ab_0)$ is a critical point of the functional in \eqref{eq:GL*}; it is said to be stable if it is a local minimizer. Using the direct method of the calculus of variations, we can prove that a minimizer $(\psi_*,\Ab_*)\in\mathcal H$ of $\mathcal E$ exists (cf. \cite[Sec.~3]{GS}).

Recall the eigevalue $\lambda(b,\gamma)$ introduced in \eqref{eq:p-ev}. Linearizing  the functional in \eqref{eq:GL*} near the normal state $(0,b\Ab_0)$, we get the following:
\begin{itemize}[--]
\item If $\lambda(b,\gamma)<\frac{R^2}{\xi_0^2}\mu(T)$, then the normal state is not stable and the global minimizer $(\psi_*,\Ab_*)$ is non-trivial in the sense that $\psi_*\not\equiv0$\,;
\item If $\lambda(b,\gamma)>\frac{R^2}{\xi_0^2}\mu(T)$, then the normal state is a local minimizer\,.
\end{itemize}
Consequently, we introduce the critical temperature $T_{c}(b)$, in the non-zero magnetic field $b$, as the solution of the equation
\[
\lambda(b,\gamma)=\frac{R^2}{\xi_0^2}\mu(T)\,.
\]
Thanks to~\eqref{eq:mu(T)} 
we find that
\begin{equation}\label{eq:Tc}
T_{c}(b)=\Bigl(1-\frac{\xi_0^2}{R^{2}}\lambda(b,\gamma)\Bigr)T_{c_0}\,.
\end{equation}
Using Theorem~\ref{thm:KS}, we can estimate $T_{c}(b)$ as $\gamma\to-\infty$; we find
\begin{equation}\label{eq:Tc*}
T_c(b)=\Bigl[1 -\frac{\xi_0^2}{R^{2}}\Bigl(-\gamma^2+\gamma+e(b)-\frac12\Bigr)\Bigr]T_{c_0}+o(1)\,,
\end{equation} 
where $e(b)$ is introduced in \eqref{eq:dm}. It is worth noticing that
\begin{itemize}[--]
\item $T_c(b)>T_{c_0}$\,;
\item Up to approximation errors, $T_c(b)$ is a periodic function of $b$, which is consistent with the Little--Parks effect\,;
\item For $T<T_c(b)$, the global minimizer of $\mathcal E$ is non-trivial (in the sense $\psi_*\not\equiv0$)\,; while for $T>T_c(b)$, the normal solution is a local minimizer of $\mathcal E$.
\end{itemize}

\subsection*{Acknowledgments} This work started when A. Kachmar visited the mathematics department in Lund University. The research of A. Kachmar is supported by a grant from  the Lebanese university within the project ``Analytical and numerical aspects of the Ginzburg--Landau
model''.

\section{Proof of Theorem~\ref{thm:KS}}

\subsection{Outline}

The proof consists of several reductions to operators that are easier
to handle. In the first step we change parameter to have a semi-classical
parameter. We then observe that we have localization close to the boundary,
make a Fourier decomposition, and express the interesting operators and quadratic forms
in suitable coordinates. Some effective operators appear, and we
expand their eigenvalues in terms of the semi-classical parameter.

Since we are not looking at the large magnetic field limit, the terms in the
potential that appears in polar coordinates is easier to handle since the
angular momentum and magnetic field strength do not compete against each other.

\subsection{Translation of Theorem~\ref{thm:KS} into a semi-classical statement}

It is convenient to work in a \emph{semi-classical} framework. We do so by 
introducing the semi-classical parameter $h=\gamma^{-2}$.
Then $h\to 0_+$ when $\gamma\to-\infty$, and the quadratic form $\mathcal Q^b_\gamma$
can be written as
\[
\mathcal Q^b_\gamma(u)
=
h^{-2}\biggl(\int_\Omega|(h\nabla-ibh\Ab_0)u(x)|^2\,dx-h^{3/2}\int_{\partial\Omega}|u(x)|^2\,ds(x)\biggr)\,.
\]
Consequently, we get the $h$-dependent self-adjoint operator
\begin{equation}\label{eq:Lh} 
\mathcal L_{h}^b=-(h\nabla-ibh\Ab_0)^2\,,
\end{equation} 
with domain
\begin{equation}\label{eq:Lh-dom}
D(\mathcal L_{h}^b)=\{u\in H^2(\Omega)~:~\nu\cdot (\nabla-ib\Ab_0)
u-h^{1/2}u=0{\rm ~on~}\partial\Omega\}\,.
\end{equation} 
The spectra of the operators $\mathcal P^b_\gamma$ and $\mathcal L_h^b$ are
related as
\[
\sigma (\mathcal P^b_\gamma)=h^{-2}\sigma(\mathcal L_h^b)\,.
\]
Let $\mu_1(h,b)$ be the principal eigenvalue of the operator  $\mathcal L_{h}^b$.
Theorem~\ref{thm:KS} can be rephrased as follows.

\begin{thm}\label{thm:KS'}
Let $b>0$. Then, as $h\to 0_+$,
\[
\mu_1(h,b)=-h-h^{3/2}+\left(\inf_{m\in\mathbb Z}\Bigl(m-\frac{b}{2}\Bigr)-\frac12\right)h^2+o(h^2)\,.
\]
\end{thm}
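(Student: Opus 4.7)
The plan is to exploit the rotational symmetry of the disc and of $\Ab_0$ to Fourier-decompose $\mathcal L_h^b$ in the angular variable, reducing the problem to a family of one-dimensional fiber operators, and then to analyse each fiber by boundary localization plus a perturbation argument around a solvable half-line model. Writing $u(r,\theta)=\sum_{m\in\mathbb Z}e^{im\theta}v_m(r)/\sqrt{2\pi}$ in polar coordinates, the rotational invariance of both $\Omega$ and $\Ab_0$ produces the direct-sum decomposition
\[
\mathcal L_h^b=\bigoplus_{m\in\mathbb Z}\hat{\mathcal L}_h^{b,m},\qquad \hat{\mathcal L}_h^{b,m}v=-h^2\Bigl(\partial_r^2+\frac1r\partial_r\Bigr)v+h^2\frac{(m-br^2/2)^2}{r^2}v,
\]
acting on $L^2((0,1),r\,dr)$ with the inherited Robin condition at $r=1$ and the natural regularity at $r=0$. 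Consequently $\mu_1(h,b)=\inf_{m\in\mathbb Z}\mu_1^m(h,b)$, and it suffices to expand each $\mu_1^m(h,b)$ to order $o(h^2)$, sufficiently uniformly in $m$ to commute with the infimum.

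For each fiber I would first remove the first-order term by the unitary substitution $v=w/\sqrt{r}$, which produces the flat operator $-h^2\partial_r^2+h^2\bigl(\frac{(m-br^2/2)^2}{r^2}-\frac{1}{4r^2}\bigr)$ on $L^2((0,1),dr)$, with a modified Robin condition of the form $w'(1)=(h^{-1/2}+1/2)w(1)$. Agmon-type estimates at the scale $\sqrt h$, analogous to those developed in \cite{K-dia}, confine the ground state to a boundary layer of width $O(\sqrt h)$ up to exponentially small errors. After the rescaling $\tau=(1-r)/\sqrt h$ and division of the eigenvalue equation by $h$, the fiber problem takes the half-line form
\[
-\partial_\tau^2\tilde w+h\,W_m(\tau;h)\,\tilde w=E\,\tilde w,\qquad \tilde w'(0)=-\bigl(1+\tfrac{\sqrt h}{2}\bigr)\tilde w(0),\qquad E=\mu/h,
\]
where Taylor expansion near $r=1$ gives $h\,W_m(\tau;h)=h\bigl[(m-\tfrac b 2)^2-\tfrac14\bigr]+O\bigl(h^{3/2}(1+\tau)\bigr)$.

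This is a regular perturbation of the model operator $P_0=-\partial_\tau^2$ on $(0,\infty)$ with $\tilde w'(0)=-\tilde w(0)$, whose ground state is $\sqrt{2}\,e^{-\tau}$ with eigenvalue $-1$. The key observation is that keeping the pure Robin perturbation yields an exact solution: $\tilde w_h(\tau)=e^{-(1+\sqrt h/2)\tau}$ satisfies both the operator equation and the modified boundary condition, with eigenvalue $-(1+\sqrt h/2)^2=-1-\sqrt h-h/4$. Adding the constant shift $h\bigl[(m-\tfrac b 2)^2-\tfrac14\bigr]$ coming from $hW_m$ then yields
\[
E_m(h)=-1-\sqrt h+\Bigl[\bigl(m-\tfrac b 2\bigr)^2-\tfrac12\Bigr]h+o(h),
\]
in which the $-1/2$ splits cleanly as $-\tfrac14$ from the curvature correction $-1/(4r^2)$ produced by the $\sqrt{r}$ substitution plus $-\tfrac14$ from the second-order effect of the Robin perturbation. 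The upper bound on $\mu_1^m(h,b)$ is then obtained by substituting $e^{-(1+\sqrt h/2)\tau}$, suitably cut off and pulled back to the disc, into the Rayleigh quotient and estimating the $\tau$-dependent remainders in $hW_m$ with the aid of exponential decay; the lower bound is obtained by IMS localization to a $\sqrt h$-boundary layer via Agmon estimates, followed by a second-order Rayleigh--Schrödinger computation that reproduces the solvable Robin model.

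The principal obstacle is the passage to the infimum in $m$: the above fiber-wise expansion is uniform only for $|m-b/2|\le M$, whereas the infimum is taken over all $m\in\mathbb Z$. One must therefore establish, for $|m-b/2|\ge M$, a separate coarser bound of the form $\mu_1^m(h,b)\ge -h-h^{3/2}+c\,h^2(m-b/2)^2$ with $c>0$, by exploiting the growth of the centrifugal--magnetic potential $(m-br^2/2)^2/r^2$ on the support of the boundary-localized quasi-mode. Once this truncation is in place, only the finitely many integers $m$ achieving $\inf_{m\in\mathbb Z}(m-b/2)^2=e(b)$ contribute to the infimum, and Theorem~\ref{thm:KS'} follows by combining the matching upper and lower bounds.
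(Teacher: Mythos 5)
Your proposal is correct and reaches the same expansion, but it extracts the crucial $-\tfrac12 h$ correction by a genuinely different mechanism. The paper first localizes to the annulus $1-h^\rho<|x|<1$ (precisely to avoid the origin), keeps the curved measure $(1-h^{1/2}\tau)\,d\tau$, and obtains the constant $-\tfrac12$ as the second corrector $\mu_2$ in a formal quasi-mode expansion of the weighted operator $\mathcal H_h=-\frac{d^2}{d\tau^2}+\frac{h^{1/2}}{1-h^{1/2}\tau}\frac{d}{d\tau}$ (Lemma~\ref{lem:HK-tams}, via $\langle\tau u_0,u_0\rangle=\tfrac12$). You instead flatten the measure with $v=w/\sqrt r$, which converts the curvature into the explicit potential shift $-\frac{1}{4r^2}$ and the modified Robin parameter $1+\tfrac{\sqrt h}{2}$, and then notice that the resulting half-line model is \emph{exactly} solvable with eigenvalue $-(1+\sqrt h/2)^2$; the two quarters add to the same $-\tfrac12$. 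This is arguably cleaner and explains the constant, at the price of one technical wrinkle you gloss over: performing $v=w/\sqrt r$ on all of $(0,1)$ puts the $m=0$ fiber in the limit-circle case at $r=0$ (the potential $-\frac{1}{4r^2}$), so you must either fix the Friedrichs realization there or, as the paper does, cut to an annulus \emph{before} changing variables; your Agmon localization makes this harmless but the order of operations should be stated. Your treatment of large $|m|$ by a coarse lower bound on the centrifugal--magnetic potential is essentially the paper's Proposition~\ref{prop:mfinite}.
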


\subsection{Reduction to a thin ring}\label{sec:toring}
Our aim is to work in (a variant of) polar coordinates. However, if we change
directly to polar coordinates we will get some illusive problems at the origin
with negative powers of $r=|x|$. 

For small $h$, the ground states of the operator $\mathcal L_h^b$ are localized near 
the boundary of $\Omega$ (Proposition~\ref{prop:dec} below). This will allow
us to work in an annulus instead of the disk.
Before we give the localization result we show that there exists a sufficiently
small eigenvalue for small $h$.

\begin{lemma}\label{lem:existeig}
Let $b>0$. Then there exists $h_0\in(0,1)$ such that for all $h\in(0,h_0)$
\[
\mu_1(h,b)\leq -h-\frac12 h^{3/2}.
\]
\end{lemma}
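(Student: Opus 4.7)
The strategy is to apply the min--max principle with a simple boundary-localised trial function. The natural candidate comes from the 1D half-line Robin problem $-\partial_t^2$ on $(0,\infty)$ with condition $\partial_t u(0)=h^{-1/2}u(0)$, whose ground state is $e^{-t/\sqrt{h}}$ with eigenvalue $-1/h$. Transplanted to the disc along the inward normal, this suggests the purely radial trial function
\[
u_h(r,\theta):=e^{-(1-r)/\sqrt{h}},
\]
which already lies in $H^1(\Omega)$ and is exponentially small away from $\partial\Omega$. Because the leading $-h$ and sub-leading $-\tfrac12 h^{3/2}$ we are after are insensitive to angular momentum and to the small magnetic correction, no cut-off or phase factor $e^{im\theta}$ is needed at this level of accuracy.

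The next step is to insert $u_h$ into the quadratic form associated with $\mathcal{L}_h^b$, namely $\int_\Omega|(h\nabla-ibh\Ab_0)u|^2\,dx - h^{3/2}\int_{\partial\Omega}|u|^2\,ds$. Since $u_h$ is $\theta$-independent and $\Ab_0=\tfrac{r}{2}\,e_\theta$, one obtains the pointwise identity
\[
|(h\nabla-ibh\Ab_0)u_h|^2 = h|u_h|^2+\frac{b^2h^2 r^2}{4}|u_h|^2,
\]
so the kinetic contribution equals $h\|u_h\|^2 + O(h^{5/2})$ while the Robin boundary term yields exactly $-2\pi h^{3/2}$. Substituting $t=1-r$ turns the remaining radial integrals into elementary Laplace-type ones, and extending the range of $t$ from $(0,1)$ to $(0,\infty)$ costs only an exponentially small error. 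This yields
\[
\|u_h\|^2 = \pi\sqrt{h} - \frac{\pi h}{2}+O(e^{-c/\sqrt{h}}),
\]
and hence, after expanding $\|u_h\|^{-2}$ to first order,
\[
\frac{\int_\Omega|(h\nabla-ibh\Ab_0)u_h|^2\,dx - h^{3/2}\int_{\partial\Omega}|u_h|^2\,ds}{\|u_h\|^2} = h - \frac{2\pi h^{3/2}}{\pi\sqrt{h}\,(1-\sqrt{h}/2)}+O(h^2) = -h-h^{3/2}+O(h^2).
\]

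The desired inequality $\mu_1(h,b)\leq -h-\tfrac12 h^{3/2}$ then follows from the min--max principle for any $h$ so small that the $O(h^2)$ remainder is dominated by $\tfrac12 h^{3/2}$. I do not expect any genuine obstacle: the ansatz $e^{-(1-r)/\sqrt{h}}$ is the evident candidate, and the remainder of the argument consists of routine Laplace integrals. The only step requiring a little care is pushing the expansions of numerator and denominator far enough simultaneously to recover the $-h^{3/2}$ correction cleanly; notice in passing that the resulting bound is in fact sharper than what the lemma claims, reflecting the true second-order term in Theorem~\ref{thm:KS'}.
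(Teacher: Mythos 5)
Your proposal is correct and follows essentially the same route as the paper, which tests the very same trial function $c\exp\bigl(h^{-1/2}(|x|-1)\bigr)$ in the quadratic form and obtains the Rayleigh quotient $-h-h^{3/2}+\mathcal O(h^2)$, from which the stated bound with coefficient $\tfrac12$ follows for small $h$. Your elaboration of the Laplace-type integrals and the $O(h^{5/2})$ magnetic correction is accurate and simply fills in the ``direct calculation'' the paper leaves implicit.
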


\begin{proof}
Let $u(x)=c\exp(h^{-1/2}(|x|-1))$, where $c$ is chosen so that $u$ becomes normalized in $L^2(\Omega)$.
A direct calculation gives
\[
\frac{h^2\mathcal Q^b_\gamma(u)}{\|u\|^2}=-h-h^{3/2}+\mathcal O(h^2).
\]
By changing the coefficient in front of $h^{3/2}$ we get the existence of $h_0\in(0,1)$
such that the claimed inequality holds for $h\in(0,h_0)$.
\end{proof}

\begin{proposition}[localization of ground states]\label{prop:dec} Let $M\in(-1,0)$. For all $\alpha <
\sqrt{-M}$,  there exist constants $C>0$  and $h_0\in(0,1)$ such
that, if $u_h$ is a normalized ground state of $\mathcal L_h$ with eigenvalue
bounded above by $Mh$, then, for all $h\in(0,h_0)$,
\[
\int_\Omega \left(|u_h(x)|^2+h|(\nabla-ib\Ab_0) u_h(x)|^2\right)\exp\left(\frac{2\alpha\, {\rm
dist}(x,\partial\Omega)}{h^{1/2}}\right)\,dx\leq C\,.
\]
\end{proposition}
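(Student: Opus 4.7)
The plan is a magnetic Agmon estimate based on the Lipschitz weight
\[
\Phi(x) = \alpha\,\mathrm{dist}(x,\partial\Omega)/\sqrt{h},
\]
which vanishes on $\partial\Omega$ and satisfies $|\nabla\Phi|^2 = \alpha^2/h$ almost everywhere. To keep every integral finite during the manipulations I would first argue with the bounded truncations $\Phi_N = \min(\Phi,N)$ and let $N\to\infty$ at the end by monotone convergence.

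The key identity comes from computing $\operatorname{Re}\langle e^{2\Phi}u_h,\mathcal L_h^b u_h\rangle$ via the sesquilinear form attached to $\mathcal L_h^b$. Expanding $(h\nabla - ibh\Ab_0)(e^{2\Phi}u_h)$, applying the pointwise identity $2\operatorname{Re}[\bar u_h(h\nabla - ibh\Ab_0)u_h] = h\nabla|u_h|^2$ to the cross term (the magnetic contribution is purely imaginary and vanishes), and invoking the Robin boundary condition yields the magnetic Agmon identity
\[
\int_\Omega|(h\nabla-ibh\Ab_0)(e^\Phi u_h)|^2\,dx - h^{3/2}\int_{\partial\Omega}|u_h|^2\,ds = \lambda\int_\Omega e^{2\Phi}|u_h|^2\,dx + h^2\int_\Omega|\nabla\Phi|^2 e^{2\Phi}|u_h|^2\,dx.
\]
Dropping the nonnegative first term on the left and substituting $\lambda \leq Mh$ together with $|\nabla\Phi|^2 = \alpha^2/h$ gives
\[
(|M|-\alpha^2)h\int_\Omega e^{2\Phi}|u_h|^2\,dx \leq h^{3/2}\int_{\partial\Omega}|u_h|^2\,ds,
\]
which is useful since $\alpha^2 < |M|$. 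The $L^2$ part of the claim therefore reduces to showing $B:=\int_{\partial\Omega}|u_h|^2\,ds = O(h^{-1/2})$.

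The main obstacle is exactly this boundary bound. I would obtain it from the classical trace inequality
$\|u_h\|_{L^2(\partial\Omega)}^2 \leq \varepsilon\|\nabla u_h\|_{L^2(\Omega)}^2 + C_\varepsilon\|u_h\|_{L^2(\Omega)}^2$
combined with the energy identity $\|(h\nabla-ibh\Ab_0)u_h\|_{L^2}^2 = \lambda + h^{3/2}B \leq h^{3/2}B$ (using $\lambda<0$) and the boundedness of $\Ab_0$ on the disk, which together give $h^2\|\nabla u_h\|^2_{L^2} \leq 2h^{3/2}B + Ch^2$. Choosing $\varepsilon$ of order $h^{1/2}$ absorbs the $B$ on the right-hand side and forces $B = O(h^{-1/2})$, which closes the $L^2$ estimate.

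Finally, the magnetic $H^1$ part of the claim follows by returning to the Agmon identity: once $\int e^{2\Phi}|u_h|^2\,dx \leq C$ is in hand, the identity shows $\int|(h\nabla-ibh\Ab_0)(e^\Phi u_h)|^2\,dx \leq Ch$. Splitting $(h\nabla-ibh\Ab_0)(e^\Phi u_h) = e^\Phi(h\nabla-ibh\Ab_0)u_h + he^\Phi u_h\nabla\Phi$ and using $|a+b|^2\geq \tfrac12|a|^2-|b|^2$ together with $|h\nabla\Phi|^2 = \alpha^2 h$ yields the desired bound on $h\int e^{2\Phi}|(\nabla-ib\Ab_0)u_h|^2\,dx$. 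Letting $N\to\infty$ in the truncation completes the proof.
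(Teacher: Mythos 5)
Your argument is correct, and it is precisely the standard magnetic Agmon estimate that the paper itself omits, referring instead to the analogous proof of Theorem~5.1 in \cite{HK-tams}: the IMS/Agmon identity for the weight $e^{\Phi}$ with $\Phi=\alpha\,\mathrm{dist}(x,\partial\Omega)/\sqrt{h}$, the reduction of the weighted $L^2$ bound to the trace bound $\int_{\partial\Omega}|u_h|^2\,ds=\mathcal O(h^{-1/2})$, and the recovery of the weighted gradient bound from the same identity all match the intended proof. The only cosmetic remark is that the truncation $\Phi_N=\min(\Phi,N)$ is not strictly needed here, since $\Omega$ is bounded and $\Phi$ is already bounded for each fixed $h$; but including it is harmless.
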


The proof of Proposition~\ref{prop:dec} is similar to the one of \cite[Thm.~5.1]{HK-tams}, 
and we leave out the details.

As a consequence of the concentration properties of the ground states, we can approximate the principal eigenvalue $\mu_1(h,b)$ by a ground state energy $\tilde\mu(h,b,\rho)$ that we describe next. 

Let $\rho\in(0,\frac12)$ and consider the annulus $\Omega_h=\{x\in\mathbb R^2~:~1-h^\rho<|x|<1\}$. We introduce the quadratic form
\begin{equation}\label{eq:ringF}
q_h^{b,\rho}(u)=h^2 \int_{\Omega_h} |(\nabla-ib\Ab_0)u(x)|^2\,dx-h^{3/2}\int_{|x|=1}|u(x)|^2\,ds\,
\end{equation}
defined on functions in $H^1(\Omega_h)$ with trace zero on the inner part of
the boundary $\Gamma_{\rm in}:=\{x\in\mathbb R^2~:~|x|=1-h^\rho$\}. This quadratic form is
related to a self-adjoint operator with mixed boundary conditions.
Its lowest eigenvalue $\tilde\mu_1(h,b,\rho)$ is given by
\begin{equation}\label{eq:mu1-ring}
\tilde\mu_1(h,b,\rho)
=
\inf\frac{q_h^{b,\rho}(u)}{\int_{\Omega_h}|u(x)|^2\,dx}
\end{equation}
where the infimum is taken over all $u$ in the domain of the quadratic form (i.e. $u\in H^1(\Omega_h)$ with $u=0$ on $\Gamma_{\rm in}$).

\begin{lemma} Assume that $b>0$ and $\rho\in(0,\frac12)$. Then there exists
$h_0\in(0,1)$ such that, for all $h\in(0,h_0)$,
\begin{equation}\label{eq:ring}
\mu_1(h,b)=\tilde \mu_1(h,b,\rho)+\mathcal O\Big(\exp\bigl(-h^{\rho-\frac12} \bigr)\Big).
\end{equation}
\end{lemma}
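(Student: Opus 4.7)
The plan is to prove the two inequalities $\mu_1(h,b)\leq\tilde\mu_1(h,b,\rho)$ (exactly) and $\tilde\mu_1(h,b,\rho)\leq\mu_1(h,b)+\mathcal{O}(\exp(-h^{\rho-1/2}))$ separately. The first follows from an extension-by-zero argument; the second from an IMS-type localization combined with the Agmon decay of Proposition~\ref{prop:dec}.

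For the upper bound I would take a normalized ground state $\tilde u_h$ associated with $\tilde\mu_1(h,b,\rho)$. Since $\tilde u_h$ vanishes in the trace sense on $\Gamma_{\rm in}$, its extension by zero to $\Omega$ lies in $H^1(\Omega)$, has the same $L^2$-norm, and realizes the same value of the quadratic form associated with $\mathcal L_h^b$ (the boundary integral over $\partial\Omega$ is identical, and $\mathcal L_h^b$ carries no contribution from $\Gamma_{\rm in}$). The min-max principle then yields $\mu_1(h,b)\leq \tilde\mu_1(h,b,\rho)$ without error.

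For the reverse inequality I would pick a normalized ground state $u_h$ of $\mathcal L_h^b$. By Lemma~\ref{lem:existeig} its eigenvalue lies below $Mh$ for some fixed $M\in(-1,0)$ and small $h$, so Proposition~\ref{prop:dec} applies with any $\alpha<\sqrt{-M}$. I would introduce a smooth radial cut-off $\chi_h(x)=\chi(|x|)$ with $\chi\equiv 0$ for $r\leq 1-h^\rho$, $\chi\equiv 1$ for $r\geq 1-h^\rho/2$ and $|\chi'|\leq Ch^{-\rho}$, so that $\chi_h u_h$ is admissible in the variational problem \eqref{eq:mu1-ring}. The standard IMS identity then gives
\[
q_h^{b,\rho}(\chi_h u_h)=\mu_1(h,b)\,\|\chi_h u_h\|_{L^2(\Omega_h)}^2+h^2\int_{\Omega}|\nabla\chi_h|^2|u_h|^2\,dx,
\]
where the Robin boundary term reproduces correctly because $\chi_h\equiv 1$ in a neighbourhood of $\partial\Omega$. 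Since $\supp\nabla\chi_h\subset\{\dist(\cdot,\partial\Omega)\geq h^\rho/2\}$, Proposition~\ref{prop:dec} bounds $\int_{\supp\nabla\chi_h}|u_h|^2\,dx$ by $C\exp(-\alpha h^{\rho-1/2})$; the prefactor $h^{2-2\rho}$ coming from $|\nabla\chi_h|^2$ is absorbed by the exponential, and the mass deficit $1-\|\chi_h u_h\|_{L^2}^2$ is controlled by the same quantity. Dividing by $\|\chi_h u_h\|^2$ (which is $\geq 1/2$ for small $h$) and using $\mu_1(h,b)=\mathcal{O}(h)$ yields the claimed lower bound.

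The only real obstacle is cosmetic: matching the constant $\alpha$ in the Agmon weight to the bare $\exp(-h^{\rho-1/2})$ in the statement. This is handled by choosing $M$ in $(-1,0)$ close to $-1$ in Lemma~\ref{lem:existeig} so that $\alpha$ may be taken arbitrarily close to $1$, at the cost, if needed, of replacing $\rho$ by a marginally smaller exponent; the rest is routine bookkeeping.
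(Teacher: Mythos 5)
Your proposal is correct and follows essentially the same route as the paper: the exact inequality $\mu_1(h,b)\leq\tilde\mu_1(h,b,\rho)$ by extension by zero, and the reverse bound by smoothly cutting off the true ground state and invoking the Agmon-type decay of Proposition~\ref{prop:dec}. Your treatment is in fact more detailed than the paper's sketch (which simply fixes $\alpha=\tfrac12$ and asserts the error is exponentially small), and your closing remark on tuning $M$ and $\alpha$ to match the bare exponent $\exp(-h^{\rho-1/2})$ addresses a bookkeeping point the paper glosses over.
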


\begin{proof}
The inequality $\mu_1(h,b)\leq \tilde\mu_1(h,b,\rho)$ is not asymptotic. If $\tilde u$
is a function minimizing the quotient in \eqref{eq:mu1-ring}, then we can extend it by zero
inside the annulus. Inserting the new function into the quadratic form for $\mathcal L_h^b$, we
find that $\mu_1(h,b)\leq \tilde\mu_1(h,b,\rho)$.

To get a bound in the opposite direction, we cut off (smoothly) the eigenfunction
corresponding to $\mu_1(h,b)$, since it does not satisfy the correct boundary condition
if $|x|=1-h^\rho$. Thanks to Proposition~\ref{prop:dec} (with the choice $\alpha=\frac12$)
the error introduced is exponentially small.
\end{proof}

In light of \eqref{eq:ring}, we finish the proof of Theorem~\ref{thm:KS'} once we prove that
\begin{equation}\label{eq:ring*}
\tilde\mu(h,b,\rho)=-h-h^{3/2}+\left(\inf_{m\in\mathbb Z}\Bigl(m-\frac{b}{2}\Bigr)^2-\frac12\right)h^2+o(h^2)\,.
\end{equation}
The $m$ in the right-hand side stands for the quantized angular momentum. Our
next step is to make a Fourier expansion that will reduce our study to the
study of an infinite family (parametrized by $m\in\mathbb Z$) of ordinary
differential operators.

\subsection{Reduction to fiber operators}\label{sec:fiber}
We recall that $b>0$ and $\rho\in(0,\frac12)$ are considered to
be fixed constants.
In polar coordinates ($x_1=r\cos\theta$, $x_2=r\sin\theta$) the quadratic 
form $q_h^{b,\rho}$ reads
\[
h^2\biggl(\int_0^{2\pi}\int_{1-h^\rho}^1 \Bigl(|\partial_ru|^2+\frac{1}{r^2}\Bigl|\Bigl(\partial_\theta-i\frac{b}{2}r^2\Bigr)u\Bigr|^2\Bigr) r\,dr\,d\theta-h^{-1/2}\int_0^{2\pi}|u|^2\,d\theta\biggr).
\]
Next, we  use the completeness of the orthonormal family $\{e^{im\theta}/\sqrt{2\pi}\}_{m\in\mathbb Z}$
in $L^2([0,2\pi])$, and write 
\[
u(r,\theta)=\sum_{m\in\mathbb Z} u_m(r)\frac{e^{im\theta}}{\sqrt{2\pi}}.
\]
Here we assume that each $u_m$ belongs to $L^2((1-h^\rho,1),r\,dr)$. 
We are led to study the family of quadratic forms
\[
u_m\mapsto h^2\biggl(\int_{1-h^\rho}^1 \Bigl(|u_m'(r)|^2+\frac{1}{r^2}\Bigl|\Bigl(m-\frac{b}{2}r^2\Bigr)u_m(r)\Bigr|^2\Bigr) r\,dr-h^{-1/2}|u_m(1)|^2\biggr).
\]
Since we have localization to the outer circle, it is convenient to work with the
variable $\tau=h^{-1/2}(1-r)$, the scaled distance from $|x|=1$. We write 
$\tilde u_m(\tau)=u_m(r)$
and denote by 
\begin{equation}\label{eq:delta}
\delta:=h^{\rho-\frac12}\,,
\end{equation}
the upper limit of $\tau$. The relevant quadratic forms to study is
\begin{multline*}
\tilde u_m\mapsto 
\\
\int_{0}^\delta \Bigl\{|\tilde u_m'(\tau)|^2
+h(1-h^{1/2}\tau)^{-2}\Bigl|\Bigl(m-\frac{b}{2}(1-h^{1/2}\tau)^2\Bigr)\tilde u_m\Bigr|^2\Bigr\} (1-h^{1/2}\tau)\,d\tau\\
-
|\tilde u_m(0)|^2
\end{multline*}
The differential operator that corresponds to this quadratic form acts as
\begin{equation}\label{eq:H0b}
\mathcal H_{m,h}^{b,\rho}=-\frac{d^2}{d\tau^2}
+\frac{h^{1/2}}{1-h^{1/2}\tau}\frac{d}{d\tau}+\frac{h}{(1-h^{1/2}\tau)^{2}}\Bigl(m-\frac{b}{2}(1-h^{1/2}\tau)^2\Bigr)^2.
\end{equation}
With domain
\begin{equation}\label{eq:domH0b}
D(\mathcal H_{\beta,h})=\{u\in H^2((0,\delta))~:~u'(0)=-u(0)\quad{\rm and}\quad u(\delta)=0\}.
\end{equation}
$\mathcal H_{m,h}^{b,\rho}$ becomes self-adjoint in the weighted space 
$L^2((0,\delta),(1-h^{1/2}\tau)\,d\tau)$. 
We denote the smallest eigenvalue of $\mathcal H_{m,h}^{b,\rho}$ by
$\lambda_1(\mathcal H_{m,h}^{b,\rho})$.
From the completeness and orthogonality of the family $\{e^{im\theta}\}_{m\in\mathbb Z}$
it follows that
\begin{equation}\label{eq:ev=min}
\tilde\mu(h,b,\rho)=h\inf_{m\in\mathbb Z} \lambda_1\bigl(\mathcal H_{m,h}^{b,\rho}\bigr)\,.
\end{equation}
To take advantage of this equality we need information 
about $\lambda_1\bigl(\mathcal H_{m,h}^{b,\rho}\bigr)$. We will get the information
needed by comparing with simpler operators. In fact, we will first compare with
the weighted Laplace obtained by ignoring the third term in the right-hand side
of~\eqref{eq:H0b}. To do this, we first look at the simpler operator obtained by
ignoring also the second term.

\subsection{A 1D Laplacian}\
The spectrum of the operator
$-\frac{d^2}{d\tau^2}$ in $L^2(\R_+)$
with domain
$\{u\in H^2(\R_+)~:\,u'(0)=- u(0)\}$
is explicitly known (see~\cite{HK-tams}). It consists of
the simple eigenvalue $-1$ together with the interval $[0,+\infty)$.
A normalized eigenfunction corresponding to the eigenvalue $-1$ is given by
\begin{equation}\label{eq:u0}
u_0(\tau)=\sqrt{2}\exp(-\tau)\,.
\end{equation}

\subsection{A weighted 1D Laplacian}
Let $\rho\in(\frac14,\frac12)$ be a fixed constant. In the sequel, the parameter $h\in(0,1)$ varies so that
$h^{\frac12-\rho}<\frac13$.
We recall that $\delta=h^{\rho-\frac12}$ and note that  $\delta\to+\infty$ when $h\to0_+$.

In the weighted space $L^2\big((0,\delta);(1-h^{1/2}\tau)\,d\tau\big)$, we introduce  the self-adjoint
operator,
\begin{equation}\label{eq:Hh}
\mathcal H_{h}=-\frac{d^2}{d\tau^2}+
\frac{h^{1/2}}{1-h^{1/2}\tau}\frac{d}{d\tau}\,,\end{equation} with domain
\begin{equation}\label{eq:domHh}
D(\mathcal H_{h})=\{u\in H^2((0,\delta))~:~u'(0)=-u(0)\quad{\rm and}\quad u(\delta)=0\}\,.
\end{equation}
The operator $\mathcal H_{h}$ is defined starting from  the closed quadratic form
\[
q_{h}(u)=\int_0^{\delta}|u'(\tau)|^2(1-h^{1/2}\tau)\,d\tau-|u(0)|^2\,.
\]
The  increasing sequence  of the eigenvalues of $\mathcal H_{h}$ (counting multiplicities) is denoted by
$(\lambda_n(\mathcal H_{h}))_{n\in \mathbb N}$. In \cite[Lem.~4.4\,\&\,Prop.~4.5]{HK-tams} it is proved that
\begin{equation}\label{eq:HK-tams}
\lambda_1(\mathcal H_h)=-1-h^{1/2}+o(h^{1/2}) \quad{\rm and}\quad 
\lambda_2(\mathcal H_{h})\geq \mathcal O(h^\rho)
\quad(h\to0_+)\,.
\end{equation}
We are going to refine the expansion of $\lambda_1(\mathcal H_h)$ by determining the term of order $h$.

\begin{lem}\label{lem:HK-tams}
Assume that $\rho\in(\frac14,\frac12)$. Then, as $h\to0_+$, 
\[
\lambda_1(\mathcal H_h)=-1-h^{1/2}-\frac12h+o(h)\,.
\]
\end{lem}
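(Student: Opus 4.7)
The plan is to establish matching upper and lower bounds of the form $-1 - h^{1/2} - h/2 + o(h)$ using the Robin ground state on the half-line, $u_0(\tau) = \sqrt{2}\,e^{-\tau}$, as trial function. Since $u_0$ satisfies $u_0'(0) = -u_0(0)$ and decays exponentially, and since $\delta = h^{\rho-1/2} \to +\infty$, a smooth cutoff $\chi \in C^\infty([0,\delta])$ equal to $1$ on $[0,\delta-1]$ and to $0$ at $\tau = \delta$ produces a legitimate test function $\psi_h := u_0\chi$ whose truncation errors are of order $e^{-c\delta}$, negligible at any polynomial scale in $h$.

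For the upper bound, I would plug $\psi_h$ into the Rayleigh quotient for $q_h$. Using $u_0' = -u_0$ together with the identities $\int_0^\infty u_0^2\,d\tau = 1$, $\int_0^\infty \tau u_0^2\,d\tau = 1/2$, and $u_0(0)^2 = 2$, one finds
\[
q_h(\psi_h) = -1 - \tfrac12 h^{1/2} + \mathcal O(e^{-c\delta}), \qquad \|\psi_h\|_h^2 = 1 - \tfrac12 h^{1/2} + \mathcal O(e^{-c\delta}).
\]
Expanding the ratio and applying the min-max principle yields $\lambda_1(\mathcal H_h) \le -1 - h^{1/2} - h/2 + \mathcal O(h^{3/2})$.

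For the matching lower bound, I would apply Temple's inequality with the same trial function. Using $u_0'' = u_0$, one has
\[
\mathcal H_h u_0 = \Bigl(-1 - \frac{h^{1/2}}{1 - h^{1/2}\tau}\Bigr) u_0,
\]
so that, after expanding the geometric series and subtracting $\eta = q_h(\psi_h)/\|\psi_h\|_h^2$,
\[
(\mathcal H_h - \eta) u_0 = h\Bigl(\tfrac12 - \tau\Bigr) u_0 + \mathcal O(h^{3/2}) u_0.
\]
Squaring and integrating against the weighted measure (the moment $\int_0^\infty (1/2-\tau)^2 u_0^2\,d\tau = 1/4$ is finite) then gives $\|(\mathcal H_h-\eta)\psi_h\|_h^2 = \mathcal O(h^2)$. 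The spectral gap from \eqref{eq:HK-tams} provides a lower bound $\mu$ for $\lambda_2(\mathcal H_h)$ with $\mu - \eta$ bounded away from $0$ as $h \to 0_+$; Temple's inequality then yields
\[
\lambda_1(\mathcal H_h) \ge \eta - \frac{\|(\mathcal H_h-\eta)\psi_h\|_h^2}{\mu - \eta} = -1 - h^{1/2} - \tfrac12 h + \mathcal O(h^{3/2}).
\]

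The main obstacle — and what pins down the coefficient $1/2$ — is the vanishing of the first moment $\int_0^\infty (1/2-\tau) u_0^2\,d\tau = 0$, which is precisely what makes $(\mathcal H_h-\eta)u_0$ genuinely of order $h$ rather than $h^{1/2}$ for this specific value of $\eta$. Any other choice of the $h$-coefficient in $\eta$ would spoil the Temple estimate by leaving a dominant linear-in-$h$ contribution. Cutoff and domain issues at $\tau = \delta$ are routine thanks to the exponential decay of $u_0$ and the divergence of $\delta$.
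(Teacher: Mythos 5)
Your proposal is correct, and it reaches the stated expansion by a genuinely different (and somewhat leaner) route than the paper. The paper builds a second-order quasimode: it formally expands $\mathcal H_h$ in powers of $h^{1/2}$, solves the hierarchy $(-\frac{d^2}{d\tau^2}-\mu_0)u_j=\dots$ with the Robin condition at each order (finding $\mu_1=-1$, $u_1=0$, $\mu_2=-\tfrac12$, $u_2=(\tfrac{\tau^2}{4}-\tfrac18)u_0$), and then applies the spectral theorem together with the gap in \eqref{eq:HK-tams} to the cut-off trial state $\chi(\tau/\delta)(u_0+h^{1/2}u_1+hu_2)$, for which the residual is $o(h)$. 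You instead use only the zeroth-order profile $u_0$ from \eqref{eq:u0}: the exact identities $\int_0^\infty u_0^2\,d\tau=1$, $\int_0^\infty\tau u_0^2\,d\tau=\tfrac12$, $u_0(0)^2=2$ give $q_h(\psi_h)=-1-\tfrac12h^{1/2}$ and $\|\psi_h\|_h^2=1-\tfrac12h^{1/2}$ up to exponentially small cutoff errors, and expanding the quotient already produces $-1-h^{1/2}-\tfrac12h+\mathcal O(h^{3/2})$ as an upper bound; Temple's inequality with the same trial state and the $O(1)$ gap from \eqref{eq:HK-tams} then gives the matching lower bound, since $(\mathcal H_h-\eta)u_0=h(\tfrac12-\tau)u_0+\mathcal O(h^{3/2})$ has weighted norm $O(h)$ (the moment $\int_0^\infty(\tfrac12-\tau)^2u_0^2\,d\tau=\tfrac14$ being finite), so the Temple correction is $O(h^2)=o(h)$. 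Both arguments rest on the same two pillars (the explicit half-line Robin ground state and the spectral gap), but yours trades the construction of the corrector $u_2$ for an appeal to Temple's inequality, and in fact yields the slightly sharper remainder $\mathcal O(h^{3/2})$. One small remark: the cancellation of the $h^{1/2}$ terms in $(\mathcal H_h-\eta)u_0$ comes simply from matching the constant $-h^{1/2}$ in the expansion of $\mathcal H_hu_0=(-1-\tfrac{h^{1/2}}{1-h^{1/2}\tau})u_0$ against the $-h^{1/2}$ in $\eta$; the vanishing first moment $\int_0^\infty(\tfrac12-\tau)u_0^2\,d\tau=0$ is rather what certifies that the Rayleigh quotient's $h$-coefficient is exactly $-\tfrac12$ (it is the same orthogonality condition that fixes $\mu_2=-\tfrac12$ in the paper's hierarchy), so your emphasis is right in substance even if the phrasing slightly conflates the two roles.
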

\begin{proof}
According to~\eqref{eq:HK-tams} there is a spectral gap of constant order, 
so it suffices to construct
a trial state that will give an energy estimate of sufficient accuracy.
To do this we expand the operator $\mathcal H_h$ formally in $h$ as
\[
\mathcal H_{h}=-\frac{d^2}{d\tau^2}+h^{1/2}\frac{d}{d\tau}+h\tau\frac{d}{d\tau}+\mathcal O(h^{\frac12+2\rho})\frac{d}{d\tau}\,,
\]
and note that, for $\rho\in(\frac14,\frac12)$, $h^{\frac12+2\rho}=o(h)$ as $h\to0_+$. 

We work on the half-line $\R_+$ and construct functions $u_0,u_1,u_2$ and 
coefficients $\mu_0,\mu_1,\mu_2$ such that
\begin{align*}
\Bigl(-\frac{d^2}{d\tau^2}-\mu_0\Bigr)u_0&=0,\\
\Bigl(-\frac{d^2}{d\tau^2}-\mu_0\Bigr)u_1&=-u_0'+\mu_1u_0,\\
\Bigl(-\frac{d^2}{d\tau^2}-\mu_0\Bigr)u_2&=-u_1'+\mu_1u_1-\tau
u_0'+\mu_2u_0,\\
\text{and}\quad u_i'(0)&=-u_i(0)~\text{for } i\in\{0,1,2\}\,.
\end{align*}
The natural choice is then to choose $u_0$ the eigenfunction in \eqref{eq:u0} and $\mu_0$ the corresponding eigenvalue. Then we choose $\mu_1$ so that $-u_0'+\mu_1u_0$ is orthogonal to $u_0$; after that we can determine $u_1$ since we can invert the operator $-\frac{d^2}{d\tau^2}-\mu_0$ on the orthogonal complement of $u_0$. Finally, we select $\mu_2$ so that $-u_1'+\mu_1u_1-\tau
u_0'+\mu_2u_0$ is othogonal to $u_0$ which allows us eventually to determine $u_2$. In that way we obtain
\begin{align*}
&\mu_0=-1\,,\quad u_0(\tau)=\sqrt{2}\,\exp(-\tau)\,,\\
&\mu_1=-1\,,\quad u_1(\tau)=0\,,\\
& \mu_2=-\frac12\,,\quad u_2(\tau)=\left(-\frac{d^2}{d\tau^2}+1\right)^{-1}\Bigl[\Bigl(\tau-\frac12\Bigr)u_0(\tau)\Bigr]=\Bigl(\frac{\tau^2}{4}-\frac{1}{8}\Bigr)u_0(\tau)\,.
\end{align*}
Now, consider the function,
\[
f(\tau)=\chi\left(\frac{\tau}{\delta}\right)\Big( u_0(\tau)+h^{1/2}u_1(\tau)+hu_2(\tau)\Big)\,,
\]
where $\chi\in C_c^\infty([0,\infty))$ satisfies,
\[
0\leq \chi\leq 1~{\rm in~}[0,\infty)\,,\quad \chi=1~{\rm
in~}[0,1/2)\quad\text{and}\quad\chi=0~\text{in~}[1/2,+\infty)\,.
\]
The function $f$ is in the domain of the operator $\mathcal H_{h}$, and by
construction it is almost normalized in the weighted Hilbert space, having
a norm of size $1+\mathcal O(h)$.
Moreover, a straight forward estimate shows that
\[
\|\{\mathcal H_{h}-(\mu_0+\mu_1h^{1/2}+\mu_2h
)\}f\|_{{L^2((0,\delta);(1-h^{1/2}\tau)\,d\tau)}}=o(h).
\]
The spectral theorem and \eqref{eq:HK-tams} now  completes the proof of Lemma~\ref{lem:HK-tams}.
\end{proof}

\subsection{Reducing the angular momentum}
We disqualify some values of the angular momentum $m$
from minimizing the right-hand side in \eqref{eq:ev=min}.

\begin{proposition}\label{prop:mfinite}
Assume that $b>0$, $\rho\in(\frac14,\frac12)$ and that $h\in(0,1)$. 
If $|m|>(1+\sqrt{2})b/2$, then
\[
\lambda_1(\mathcal H_{m,h}^{b,\rho})>\inf_{\ell\in\mathbb Z}\lambda_1(\mathcal H_{\ell,h}^{b,\rho}).
\]
\end{proposition}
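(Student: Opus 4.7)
The plan is to decompose $\mathcal H_{m,h}^{b,\rho}=\mathcal H_h+V_m$, where
\[
V_m(\tau)=\frac{h}{(1-h^{1/2}\tau)^2}\Bigl(m-\frac{b}{2}(1-h^{1/2}\tau)^2\Bigr)^2\geq 0
\]
is a bounded non-negative multiplication potential, and to compare $\lambda_1(\mathcal H_{m,h}^{b,\rho})$ from below with $\lambda_1(\mathcal H_{0,h}^{b,\rho})$ from above, the latter trivially majorizing $\inf_{\ell\in\mathbb Z}\lambda_1(\mathcal H_{\ell,h}^{b,\rho})$. Applying the min--max principle to the pointwise inequalities $V_m\geq\inf V_m$ and $V_0\leq\sup V_0$ (interpreted on the weighted space) yields
\[
\lambda_1(\mathcal H_{m,h}^{b,\rho})\geq\lambda_1(\mathcal H_h)+\inf_\tau V_m(\tau),\qquad\lambda_1(\mathcal H_{0,h}^{b,\rho})\leq\lambda_1(\mathcal H_h)+\sup_\tau V_0(\tau),
\]
so everything reduces to controlling these two extrema.

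Next I would evaluate them by substituting $s=1-h^{1/2}\tau\in[1-h^\rho,1]$, which recasts $V_m/h$ as $g_m(s):=(m-bs^2/2)^2/s^2$. A direct differentiation locates the unique positive critical point of $g_m$ at $s_*=\sqrt{2|m|/b}$; under the hypothesis $|m|>(1+\sqrt{2})b/2$ one has $s_*^2>1+\sqrt{2}>1$, so $s_*$ sits strictly to the right of $[1-h^\rho,1]$ and $g_m$ is monotone decreasing there. Hence $\inf_\tau V_m=h\,g_m(1)=h(m-b/2)^2$. On the other side $V_0(\tau)=h(b/2)^2(1-h^{1/2}\tau)^2$ is manifestly maximized at $\tau=0$, so $\sup V_0=h(b/2)^2$.

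Combining the two bounds produces
\[
\lambda_1(\mathcal H_{m,h}^{b,\rho})-\lambda_1(\mathcal H_{0,h}^{b,\rho})\geq h\bigl[(m-b/2)^2-(b/2)^2\bigr].
\]
The threshold in the hypothesis is precisely designed so that $|m-b/2|>\sqrt{2}\,b/2$ for both signs of $m$ (strictly more for $m<0$), which gives $(m-b/2)^2>b^2/2$ and thereby a strict gap of at least $hb^2/4$. This strict positivity, combined with $\inf_{\ell\in\mathbb Z}\lambda_1(\mathcal H_{\ell,h}^{b,\rho})\leq\lambda_1(\mathcal H_{0,h}^{b,\rho})$, delivers the proposition. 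No substantial obstacle is expected: the only non-trivial point is the one-variable calculus check that the critical point of $g_m$ is expelled from $[1-h^\rho,1]$ by the hypothesis, and the numerical constant $1+\sqrt{2}$ is chosen exactly to guarantee the definite lower bound $\inf V_m-\sup V_0\geq hb^2/4$.
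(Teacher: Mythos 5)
Your argument is correct and follows essentially the same route as the paper: both proofs compare $\mathcal H_{m,h}^{b,\rho}$ with the weighted Laplacian $\mathcal H_h$ via the min--max principle, use the $m=0$ fiber (whose potential is bounded by $hb^2/4$) as the reference, and exploit the threshold $(1+\sqrt2)b/2$ to force $(m-b/2)^2>b^2/2$. The only difference is cosmetic: you evaluate $\inf_\tau V_m$ exactly as $h(m-b/2)^2$ by locating the critical point $s_*=\sqrt{2|m|/b}$ outside $[1-h^\rho,1]$, whereas the paper settles for the cruder algebraic bound $V_m/h\geq (m-b/2)^2-b^2/4$ obtained by expanding the square and discarding a positive term; both suffice.
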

\begin{proof}
We note that if $m=0$, then we have the bound
\[
\frac{h}{(1-h^{1/2}\tau)^{2}}\Bigl(m-\frac{b}{2}(1-h^{1/2}\tau)^2\Bigr)^2\leq \frac{b^2}{4}h
\]
on the potential term in $\mathcal H_{m,h}^{b,\rho}$. Comparing quadratic forms, 
\begin{equation}\label{eq:ev-m<ev-0}
\inf_{\ell\in\mathbb Z} \lambda_1\bigl(\mathcal H_{\ell,h}^{b,\rho}\bigr)\leq 
\lambda_1\bigl(\mathcal H_{0,h}^{b,\rho}\bigl)\leq \lambda_1(\mathcal H_h) +\frac{b^2}4h\,, 
\end{equation}
where $\lambda_1(\mathcal H_h)$ is the lowest eigenvalue of the operator introduced in \eqref{eq:Hh}.

We expand the square and estimate the potential term again, using the fact that $1-h^\rho<1-h^{1/2}\tau<1$,
\[
\begin{aligned}
\frac{1}{(1-h^{1/2}\tau)^{2}}\Bigl(m-\frac{b}{2}(1-h^{1/2}\tau)^2\Bigr)^2
&=
\frac{m^2}{(1-h^{1/2}\tau)^2}-mb+\frac{b^2}{4}(1-h^{1/2}\tau)^2\\
&\geq m^2-mb=\Bigl(m-\frac{b}{2}\Bigr)^2-\frac{b^2}{4}.
\end{aligned}
\]
We compare the quadratic forms and invoke~\eqref{eq:ev-m<ev-0} to find that
\[
\begin{aligned}
\lambda_1\bigl(\mathcal H_{m,h}^{b,\rho}\bigr)
&\geq \lambda_1(\mathcal H_h)+h\Bigl(\Bigl(m-\frac{b}{2}\Bigr)^2-\frac{b^2}{4}\Bigr)\\
&\geq \inf_{\ell\in\mathbb Z} \lambda_1\bigl(\mathcal H_{\ell,h}^{b,\rho}\bigr)
+h\Bigl(\Bigl(m-\frac{b}{2}\Bigr)^2-\frac{b^2}{2}\Bigr).
\end{aligned}
\]
If $|m|>(1+\sqrt{2})b/2$ then $(m-b/2)^2>b^2/2$ and thus
\[
\lambda_1\bigl(\mathcal H_{m,h}^b\bigr)
>\inf_{\ell\in\mathbb Z}
\lambda_1\bigl(\mathcal H_{\ell,h}^{b,\rho}\bigr).\qedhere
\]
\end{proof}

\subsection{A family of 1D operators}

Assume that $A>0$, $\rho\in(0,\frac12)$  and $b>0$ are fixed constants.  
Suppose that  the parameters $h\in(0,1)$ and $m\in\mathbb Z$ vary as follows
\begin{equation}\label{eq:cond-par}
h^{\frac12-\rho}<\frac13\quad{\rm and}\quad |m|\leq A\,.
\end{equation}
We introduce also the following ground state energy
\begin{equation}\label{eq:gs-m}
\hat\lambda(b,A)=\inf_{\substack{m\in\mathbb Z \\ |m|\leq A}} \lambda_1(\mathcal H_{m,h}^b)\,.
\end{equation}

\begin{proposition}\label{prop:w-op}
Given $A,b>0$ and $\rho\in(\frac14,\frac12)$, it holds that
\[
\hat\lambda(b,A) = -1+h^{1/2}+\Bigl(\hat\beta(b,A)-\frac12\Bigr)h+o(h)\,,
\]
where
\[
\hat\beta(b,A)=\inf_{\substack{m\in\mathbb Z\\ |m|\leq A}}\Bigl(m-\frac{b}2\Bigr)^2\,.
\]
\end{proposition}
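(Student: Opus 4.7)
The plan is to view $\mathcal H_{m,h}^{b,\rho}$ as a small multiplicative perturbation of the weighted Laplacian $\mathcal H_h$ studied in Lemma~\ref{lem:HK-tams}. Writing
\[
\mathcal H_{m,h}^{b,\rho}=\mathcal H_h+V_m(h,\tau),\qquad V_m(h,\tau):=\frac{h}{(1-h^{1/2}\tau)^{2}}\Bigl(m-\frac{b}{2}(1-h^{1/2}\tau)^2\Bigr)^2,
\]
the first step is to Taylor expand the function $s\mapsto (1-s)^{-2}(m-\tfrac{b}{2}(1-s)^{2})^2$ at $s=0$; for $|m|\leq A$ and $s\in(0,\tfrac13)$ this yields the uniform bound
\[
V_m(h,\tau)=h\Bigl(m-\frac{b}{2}\Bigr)^2+h^{3/2}\tau\, R_m(h,\tau),\qquad |R_m(h,\tau)|\leq C(A,b),
\]
valid on $(0,\delta)$ under the standing hypothesis $h^{1/2-\rho}<\tfrac13$. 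In particular $V_m$ is a bounded multiplication operator of norm $\mathcal O(h)$.

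For the upper bound, I would use, for each fixed $m$ with $|m|\leq A$, the very same quasi-mode $f=\chi(\tau/\delta)(u_0+h^{1/2}u_1+hu_2)$ constructed in the proof of Lemma~\ref{lem:HK-tams}; since the boundary condition at $\tau=0$ is the same and $f(\delta)=0$, it belongs to the domain of $\mathcal H_{m,h}^{b,\rho}$. The Rayleigh quotient splits as
\[
\frac{\langle \mathcal H_{m,h}^{b,\rho} f,f\rangle_w}{\|f\|_w^2}=\frac{\langle \mathcal H_h f,f\rangle_w}{\|f\|_w^2}+\frac{1}{\|f\|_w^2}\int_0^\delta V_m(h,\tau)|f(\tau)|^2(1-h^{1/2}\tau)\,d\tau,
\]
where $\|\cdot\|_w$ denotes the weighted norm. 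The first term is $-1-h^{1/2}-\tfrac12 h+o(h)$ by Lemma~\ref{lem:HK-tams}, and the expansion of $V_m$ together with the exponential decay of $u_0,u_2$ (which keeps $\int\tau|f|^2(1-h^{1/2}\tau)\,d\tau$ bounded uniformly in $h$) gives $h(m-b/2)^2\|f\|_w^2+\mathcal O(h^{3/2})$ for the second. Minimizing over $|m|\leq A$ then yields $\hat\lambda(b,A)\leq -1-h^{1/2}+(\hat\beta(b,A)-\tfrac12)h+o(h)$.

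For the matching lower bound, let $\phi_{m,h}$ be a normalized ground state of $\mathcal H_{m,h}^{b,\rho}$. The upper bound above already implies $\lambda_1(\mathcal H_{m,h}^{b,\rho})\leq -\tfrac12$ for small $h$, while the essential spectrum of the half-line limit operator $-d^2/d\tau^2$ with $u'(0)=-u(0)$ is $[0,+\infty)$; a standard one-dimensional Agmon argument (analogous to Proposition~\ref{prop:dec}) therefore gives the uniform weighted estimate $\int_0^\delta \tau^2|\phi_{m,h}|^2(1-h^{1/2}\tau)\,d\tau\leq C$ for $|m|\leq A$ and $h$ small. Combining this with the Taylor expansion of $V_m$ yields
\[
\lambda_1(\mathcal H_{m,h}^{b,\rho})=q_h(\phi_{m,h})+\int_0^\delta V_m|\phi_{m,h}|^2(1-h^{1/2}\tau)\,d\tau\geq \lambda_1(\mathcal H_h)+h\Bigl(m-\frac{b}{2}\Bigr)^2-C'h^{3/2},
\]
after which Lemma~\ref{lem:HK-tams} and the infimum over $|m|\leq A$ conclude. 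The main technical point is the uniform Agmon estimate, which however is routine because the added potential $V_m$ is non-negative and the unperturbed operator has an isolated lowest eigenvalue at $-1$ separated from the continuous spectrum by a constant-order gap; the rest of the argument is bookkeeping around the weight $1-h^{1/2}\tau$ and the cutoff errors at $\tau=\delta$, both of which are exponentially small.
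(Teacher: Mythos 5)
Your proof is correct, but it is considerably heavier than necessary, and the extra machinery is where it differs from the paper. The paper's argument is a two-line perturbation estimate: it writes the same algebraic identity you do, namely that
\[
\frac{1}{(1-h^{1/2}\tau)^2}\Bigl(m-\frac{b}{2}(1-h^{1/2}\tau)^2\Bigr)^2-\Bigl(m-\frac{b}{2}\Bigr)^2
\]
equals $h^{1/2}\tau$ times a function bounded uniformly for $|m|\le A$, but then simply observes that on the truncated interval one has $h^{1/2}\tau\le h^{1/2}\delta=h^{\rho}$, so the perturbation $V_m-h(m-b/2)^2$ is bounded \emph{in sup norm} by $Ch^{1+\rho}=o(h)$. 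The min--max principle then gives $|\lambda_1(\mathcal H_{m,h}^{b,\rho})-\lambda_1(\mathcal H_h)-h(m-b/2)^2|\le Ch^{1+\rho}$ in one stroke (both directions simultaneously), and Lemma~\ref{lem:HK-tams} finishes. You instead keep the remainder in the form $h^{3/2}\tau R_m$ and compensate the linear growth in $\tau$ by decay of the relevant states: a quasi-mode computation for the upper bound and a uniform one-dimensional Agmon estimate for the ground states $\phi_{m,h}$ for the lower bound. This is valid — your Agmon step is indeed routine given the upper bound $\lambda_1\le-\tfrac12$ and the gap to the essential spectrum of the limit operator — and it has the advantage of being robust enough to work on the full half-line, where the sup-norm bound would fail; but on the interval $(0,\delta)$ it is superfluous, since $h^{3/2}\tau\le h^{1+\rho}$ pointwise already does the job without any localization input. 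One further remark: your expansion carries the sign $-1-h^{1/2}+\cdots$, which is the one consistent with Lemma~\ref{lem:HK-tams} and with Theorem~\ref{thm:KS'}; the ``$+h^{1/2}$'' in the statement of the Proposition is a typo in the paper, not an error on your part.
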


\begin{rem}\label{rem:beta}
Given $b>0$, there exists  $A_0>0$ such that, for all $A\geq A_0$,
\[
\hat\beta(b,A)=\inf_{m\in\mathbb Z} \Bigl(m-\frac{b}2\Bigr)^2\,.
\]
\end{rem}

\begin{proof}
We will write estimates that hold uniformly with respect to $(m,h)$ 
obeying the conditions in \eqref{eq:cond-par}. A calculation shows that
\[
\begin{multlined}
\frac{1}{(1-h^{1/2}\tau)^2}\Bigl(m-\frac{b}{2}(1-h^{1/2}\tau)^2\Bigr)^2-\Bigl(m-\frac{b}2\Bigr)^2\\
=
(2-h^{1/2}\tau)\biggl(\frac{1}{(1-h^{1/2}\tau)^2}m^2 -\frac{b^2}{4}\biggr)h^{1/2}\tau.
\end{multlined}
\]
Thus, using that $0\leq h^{1/2}\tau \leq h^\rho$ for $\tau\in(0,\delta)$, we can write
\[
\Bigl|\frac{1}{(1-h^{1/2}\tau)^{2}}\Bigl(m-\frac{b}{2}(1-h^{1/2}\tau)^2\Bigr)^2-\Bigl(m-\frac{b}2\Bigr)^2\Bigr|\leq Ch^\rho\,,
\]
where $C$ is a constant independent from $m\in[-A,A]$.
Consequently, the min-max principle yields 
\[
\Bigl| \lambda_1(\mathcal H_{m,h}^{b,\rho})-\Bigl(\lambda_1(\mathcal H_h)+\Bigl(m-\frac{b}2\Bigr)^2h\Bigr)\Bigr|\leq Ch^{1+\rho}\,,
\]
where $\mathcal H_{h}$ is the operator introduced in \eqref{eq:Hh}. 
Now, using  Lemma~\ref{lem:HK-tams}, we finish the proof of Proposition~\ref{prop:w-op}.
\end{proof}

\subsection{End of proof}
We now have everything we need to finish the proof of Theorem~\ref{thm:KS}. 
Remember that $b>0$ was given in the theorem. We start by choosing some 
$\rho\in(\frac14,\frac12)$. Then, by combining
the Propositions~\ref{prop:mfinite} and~\ref{prop:w-op} (see also Remark~\ref{rem:beta})
with Lemma~\ref{lem:HK-tams} we find that, as $h\to 0_+$,
\[
\inf_{m\in\mathbb Z}\lambda_1(\mathcal H_{m,h}^{b,\rho})
=
-1-h^{1/2}+\Bigl(\inf_{m\in\mathbb Z}\Bigl(m-\frac{b}{2}\Bigr)-\frac{1}{2}\Bigr)h+o(h)
\,.\]
From~\eqref{eq:ev=min} we now conclude that, as $h\to 0_+$,
\[
\tilde\mu(h,b,\rho)=-h-h^{3/2}+\Bigl(\inf_{m\in\mathbb Z}\Bigl(m-\frac{b}{2}\Bigr)-\frac{1}{2}\Bigr)h^2+o(h^2).
\]
As we mentioned in the end of Subsection~\ref{sec:toring} this was sufficient to
prove Theorem~\ref{thm:KS'} which in turn was a reformulation of Theorem~\ref{thm:KS}.

\end{document}